\newtheorem{theorem}{Theorem}[section]
\newtheorem{cor}[theorem]{Corollary}
\newtheorem{prop}[theorem]{Proposition}
\newtheorem{lemma}[theorem]{Lemma}
\newcommand{\qed}{\qquad$\Box$}
\newenvironment{proof}{\prepf\rm}{\endprepf}
\newcommand{\Aut}{\mathop{\mathrm{Aut}}}
\begin{document}

\title{A footnote to the KPT theorem in structural Ramsey theory}
\author{Peter J. Cameron\footnote{Corresponding author,
\texttt{pjc20@st-andrews.ac.uk}}\ \ 
and Siavash
Lashkarighouchani\footnote{\texttt{siavash.lashkarighouchani@gmail.com}}\\
University of St Andrews}
\date{}
\maketitle

\begin{abstract}
The celebrated theorem of Kechris, Pestov and Todor\v{c}evi\'c connecting
structural Ramsey theory with topological dynamics has as a consequence that
the Fra\"{\i}ss\'e limit of a Ramsey class of non-trivial finite relational
structures has a reduct which is a total order; this implies an earlier result 
of Ne\v{s}et\v{r}il, according to which the structures in such a class are
rigid (have trivial automorphism groups). In this paper, we give an alternative
proof of this fact. If $\mathcal{C}$ is a Fra\"{\i}ss\'e class of rigid
structures over a finite relational language, then either the Fra\"{\i}ss\'e
limit of $\mathcal{C}$ has a reduct which is a total order, or there is an
explicit failure of the Ramsey property involving a pair $(A,B)$ of structures
in $\mathcal{C}$ with $|A|=2$.

\noindent\textbf{Keywords:} Fra\"{\i}ss\'e class, Ramsey class, extremely
amenable, reduct, total order, tournament

\noindent\textbf{MSC:} 05C55, 05D10, 37B05
\end{abstract}

\section{Introduction}

It is nearly a century since Ramsey's theorem was published. The theorem states
that if $a$ and $b$ are positive integers, then there exists $c$ such that, if
the $a$-element subsets of a $c$-element set are coloured with two colours
(red and blue, say), then there exists a $b$-element set which is
\emph{monochromatic} (all of its $a$-element subsets have the same colour).

Since its publication, the work of many mathematicians, including Paul
Erd\H{o}s and his collaborators, have extended it from a theorem to a subject
(Ramsey theory) with links to logic, set theory, combinatorics, group theory,
number theory, topological dynamics, and other areas.

In particular, structural Ramsey theory deals with classes $\mathcal{C}$ of
finite structures over finite relational languages. Such a structure 
consists a set carrying named relations with prescribed arities. We always
assume that our classes are closed under isomorphism and \emph{hereditary}
(closed under taking induced substructures), and that all structures are
non-empty. We say that $\mathcal{C}$ is a \emph{Ramsey class} if, given
any two structures $A,B\in\mathcal{C}$, there exists $C\in\mathcal{C}$ such
that, if the embeddings $A\to C$ are coloured red and blue, then there is an
embedding $B\to C$ which is monochromatic, in the sense that all the
embeddings whose image is contained in the image of $B$ have the same colour.

A closely related notion is that of a \emph{Fra\"{\i}ss\'e class}, a hereditary
class $\mathcal{C}$ of finite structures satisfying
\begin{enumerate}
\item the \emph{joint embedding property}: given $B_1,B_2\in\mathcal{C}$, there
exists $C\in\mathcal{C}$ such that $B_1$ and $B_2$ are embeddable in $C$; and
\item the \emph{amalgamation property}: given $A,B_1,B_2\in\mathcal{C}$ and
embeddings $f_i:A\to B_i$ for $i=1,2$, there is a structure $C\in\mathcal{C}$
and embeddings $g_i:B_i\to C$ such that $f_1g_1=f_2g_2$.
\end{enumerate}
We will also use the \emph{strong amalgamation property}, which asserts
that in (b), the embeddings can be chosen so that the images of $g_1$ and
$g_2$ intersect precisely in the image of $f$.

The \emph{age} of a relational structure is the class of structures embeddable
in it. A structure $M$ is \emph{homogeneous} if every isomorphism between finite
substructures can be extended to an automorphism of $M$. Fra\"{\i}ss\'e's
theorem states:

\begin{theorem}
A class $\mathcal{C}$ of finite structures over a finite relational language,
closed under isomorphism and substructures, is the age of a finite or countable
homogeneous structure $M$ if and only if $\mathcal{C}$ satisfies the joint
embedding and amalgamation properties. If it does, then $M$ is unique up to
isomorphism.
\end{theorem}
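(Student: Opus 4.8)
The plan is to establish the three assertions of Fra\"{\i}ss\'e's theorem in turn: that the joint embedding and amalgamation properties are \emph{necessary} for $\mathcal{C}$ to be the age of a homogeneous structure, that they are \emph{sufficient}, and that the resulting structure is \emph{unique}. One preliminary observation underlies everything: over a finite relational language there are, up to isomorphism, only countably many finite structures, so $\mathcal{C}$ may be enumerated as $C_0,C_1,C_2,\dots$, and the construction below is a genuinely countable one. For necessity, suppose $M$ is homogeneous with age $\mathcal{C}$. Given $B_1,B_2\in\mathcal{C}$, fix embeddings of each into $M$; the substructure of $M$ induced on the union of the two images is finite, hence lies in $\mathcal{C}$ and witnesses the joint embedding property. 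For amalgamation, given embeddings $f_i\colon A\to B_i$, realise $B_1,B_2$ as substructures of $M$; the two resulting copies of $A$ are isomorphic, so homogeneity supplies an automorphism of $M$ carrying one onto the other, after which we may assume the copies of $A$ coincide, and the substructure of $M$ on the union of the (adjusted) images of $B_1,B_2$ is the desired amalgam $C$ — indeed this argument gives the strong amalgamation property.

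For sufficiency, I would build $M$ as the union of an increasing chain $A_0\subseteq A_1\subseteq\cdots$ of members of $\mathcal{C}$, arranged so that in the limit (i) every member of $\mathcal{C}$ embeds into $M$, and (ii) $M$ has the \emph{one-point extension property}: whenever $X\subseteq Y$ lie in $\mathcal{C}$ with $|Y|=|X|+1$ and $g\colon X\to M$ is an embedding, $g$ extends to an embedding $Y\to M$. Requirement (i) is discharged by using the joint embedding property to ensure $C_n$ embeds into $A_n$ at stage $n$. Requirement (ii) is met by enumerating all tasks of the form ``extend a given embedding of some $X$ into some $A_n$ across a given one-point extension $Y\supseteq X$''; there are only countably many such tasks, and a single task is handled by amalgamating $Y$ with $A_n$ over $X$ via the amalgamation property and taking the result — which is in $\mathcal{C}$ — as $A_{n+1}$. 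A standard dovetailing of these two countable lists of requirements produces the chain, and $M:=\bigcup_n A_n$ then has age exactly $\mathcal{C}$: it is hereditary, every finite substructure lies in some $A_n\in\mathcal{C}$, and conversely (i) holds.

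It remains to upgrade the one-point extension property to full homogeneity, and to prove uniqueness; both are back-and-forth arguments. Given an isomorphism between finite substructures $P,Q$ of $M$, one extends it one point at a time in alternating directions, each step being an instance of (ii), and takes the union to obtain an automorphism of $M$; this is homogeneity. For uniqueness, if $M$ and $M'$ are both homogeneous with age $\mathcal{C}$, then both satisfy the one-point extension property (necessity together with homogeneity), and a back-and-forth between $M$ and $M'$, again extending a finite partial isomorphism one point at a time and using (ii) on each side alternately, yields an isomorphism $M\to M'$.

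The step I expect to demand the most care is the bookkeeping in the sufficiency direction: one must interleave the countably many embedding requirements with the countably many extension requirements so that each is attended to at some finite stage, all the while keeping every $A_n$ inside $\mathcal{C}$. This is precisely where the hypothesis of a finite relational language is doing work — it guarantees that the relevant enumerations are countable and that finite unions of structures in the language are again structures in the language — and it is the one place where a naive construction could go wrong. The two back-and-forth arguments, by contrast, are routine once the one-point extension property has been secured.
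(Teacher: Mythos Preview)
The paper does not prove this statement at all: it is quoted as Fra\"{\i}ss\'e's theorem, a classical result, and the paper simply uses it as background. Your proposal is the standard textbook proof (necessity from homogeneity, sufficiency via an inductive construction securing the one-point extension property, then back-and-forth for homogeneity and uniqueness), and it is essentially correct for the theorem as stated.

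One genuine slip: your parenthetical claim that the necessity argument ``gives the strong amalgamation property'' is false. After using homogeneity to make the two copies of $A$ coincide inside $M$, the images of $B_1$ and $B_2$ may overlap in more than $A$. For a concrete counterexample in a finite relational language, take $M$ to be a countable set with an equivalence relation having infinitely many classes each of size exactly~$2$; this structure is homogeneous, but its age fails strong amalgamation (amalgamate two $2$-element classes over a common point). Since the theorem only asserts ordinary amalgamation, this does not damage your proof of the stated result, but the aside should be deleted. A second, more cosmetic point: the finiteness of the relational language is used to guarantee that $\mathcal{C}$ has only countably many isomorphism types, but your remark that it ensures ``finite unions of structures in the language are again structures in the language'' is not really the issue---that holds for any relational language.
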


In this situation, $M$ is called the \emph{Fra\"{\i}ss\'e limit} of
$\mathcal{C}$.

Ramsey classes were studied by Ne\v{s}et\v{r}il~\cite{nesetril1,nesetril2},
who showed:

\begin{theorem}
\begin{enumerate}
\item A Ramsey class over a finite language is a Fra\"{\i}ss\'e class
satisfying the strong amalgamation property.
\item If $\mathcal{C}$ is a nontrivial Ramsey class (that is, not consisting
of unstructured sets), then the objects in $\mathcal{C}$ are rigid (they have
trivial automorphism groups).
\end{enumerate}
\end{theorem}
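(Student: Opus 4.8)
I sketch how I would prove both parts, following essentially Ne\v{s}et\v{r}il's strategy. Heredity and closure under isomorphism are standing hypotheses, and I take the joint embedding property as given too — it is standardly part of the definition of a Ramsey class, and does not follow from the Ramsey property alone (the one-point structures over a language with a single unary predicate form a counterexample); so the content of (a) is amalgamation, and then its strengthening to strong amalgamation. I would treat the easy part (b) first, since it introduces the colouring idea, and both arguments run by contraposition.

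\emph{Rigidity.} Suppose $A\in\mathcal{C}$ has a non-identity automorphism, so that $|\Aut(A)|\ge 2$; the plan is to show that no $C\in\mathcal{C}$ can witness the Ramsey property for the pair $(A,A)$. Fix an arbitrary $C$. If $A$ does not embed in $C$ there is no monochromatic embedding $A\to C$ at all. Otherwise the embeddings $A\to C$ split, according to their image, into blocks of size $|\Aut(A)|\ge 2$, one block per copy of $A$ in $C$, and I would colour one embedding in each block red and the others blue, so that no copy of $A$ is monochromatic. Either way $C$ fails; as $C$ was arbitrary, $\mathcal{C}$ is not a Ramsey class, so every object of a Ramsey class is rigid.

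\emph{Amalgamation.} Suppose amalgamation fails for some $A,B_1,B_2\in\mathcal{C}$ with embeddings $e_i:A\to B_i$: no $C\in\mathcal{C}$ carries embeddings $g_i:B_i\to C$ with $e_1g_1=e_2g_2$. Using joint embedding, fix $B\in\mathcal{C}$ with embeddings $j_i:B_i\to B$; I claim no $C$ witnesses the Ramsey property for $(A,B)$. Given an arbitrary $C$, colour an embedding $f:A\to C$ red if $f=e_1g$ for some embedding $g:B_1\to C$, and blue otherwise. For any embedding $h:B\to C$, the embedding $e_1j_1h:A\to C$ has image inside $h(B)$ and is red, witnessed by $g=j_1h$; the embedding $e_2j_2h:A\to C$ also has image inside $h(B)$, but cannot be red, since a witnessing $g':B_1\to C$ would satisfy $e_1g'=e_2j_2h$, an amalgamation of $e_1$ and $e_2$ over $A$, contrary to hypothesis. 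Hence $h(B)$ is never monochromatic, for any $h$ and any $C$, so $\mathcal{C}$ is not Ramsey. Thus a Ramsey class has amalgamation, hence (with the remaining defining conditions) is a Fra\"{\i}ss\'e class, with a Fra\"{\i}ss\'e limit.

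\emph{Strong amalgamation, and the main obstacle.} It remains to upgrade amalgamation to strong amalgamation, and this is the step I expect to be hardest. My plan is to run the scheme above one level deeper, showing that the Fra\"{\i}ss\'e limit $M$ has no algebraicity — every finite substructure equals its own algebraic closure — from which strong amalgamation follows by a standard fact about Fra\"{\i}ss\'e classes. Were $\mathrm{acl}(A)$ strictly larger than $A$ for some finite $A\subseteq M$, then, writing $B:=\mathrm{acl}(A)\in\mathcal{C}$, the key point would be that in every $N\in\mathcal{C}$ a copy of $A$ extends to at most one copy of $B$ compatibly (such a copy must lie inside the algebraic closure of the image of $A$ once $N$ is embedded in $M$). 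I would then colour an embedding of $A$ into $C$ by recording whether, and how, its image sits inside a copy of $B$, and apply the Ramsey property to a pair built from $A$ and two copies of $B$ sharing their copy of $A$, again aiming to make every relevant copy non-monochromatic. The delicate point — the main obstacle I foresee — is the bookkeeping caused by the several copies of $A$ lying inside $B$, together with reducing to a minimal instance of algebraicity; absent a clean treatment one can simply invoke \cite{nesetril1,nesetril2} here.
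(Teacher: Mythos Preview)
The paper does not prove this theorem at all: it is stated as background and attributed to Ne\v{s}et\v{r}il via the citations \cite{nesetril1,nesetril2}, so there is no ``paper's own proof'' to compare your proposal against. The only thing the paper does with part~(b) is, a page later, derive rigidity as a consequence of the KPT theorem together with the compactness of the space of linear orders (an extremely amenable $\Aut(M)$ must fix a total order, which then restricts to an invariant order on every finite substructure); this is explicitly framed as a subsequent explanation of Ne\v{s}et\v{r}il's result, not as a proof of the theorem as stated.

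Your direct arguments are essentially the standard Ne\v{s}et\v{r}il ones and are correct. The rigidity argument via the pair $(A,A)$ and the block-colouring of embeddings is clean; note that with the embedding-colouring definition of Ramsey class used in this paper, your argument in fact goes through without the ``nontrivial'' hypothesis --- unstructured finite sets already fail to be a Ramsey class in this sense, for exactly the reason you give. Your amalgamation argument is also correct; your observation that JEP must be taken as part of the definition (and does not follow from the Ramsey property alone) is well made, and the paper is silent on this point. For strong amalgamation you honestly flag the difficulty and ultimately defer to \cite{nesetril1,nesetril2}, which is precisely what the paper itself does for the entire theorem.
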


Many examples of Ramsey classes are given in \cite{hn}.

The second part of this theorem was subsequently explained by the celebrated
theorem of Kechris, Pestov and Todor\v{c}evi\'c~\cite{kpt}. First, some
background. The symmetric group on a countable set has a natural topology, the
topology of pointwise convergence: a basis for the topology consists of the
cosets of stabilisers of finite tuples. So any subgroup has the induced
subspace topology. It is known that a subgroup is closed if and only if it is
the automorphism group of a relational structure.

A topological group $G$ is \emph{extremely amenable} if any continuous action
on a compact space has a fixed point. Now the KPT theorem states:

\begin{theorem}
Let $\mathcal{C}$ be a nontrivial Ramsey class with Fra\"{\i}ss\'e limit $M$.
Then the automorphism group of $M$ is extremely amenable.
\end{theorem}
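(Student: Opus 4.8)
The plan is to follow the route of Kechris, Pestov and Todor\v{c}evi\'c: from an arbitrary continuous action of $G:=\Aut(M)$ on a compact space, extract a $G$-fixed point directly, by converting the finite Ramsey property into a ``finitary'' Ramsey statement about finite colourings of $G$. The first ingredient is the transfer of the Ramsey property from $\mathcal{C}$ to $M$: for all finite substructures $A\leq B\leq M$ and every colouring of the set $\mathrm{Emb}(A,M)$ of embeddings $A\hookrightarrow M$ with finitely many colours, there is a substructure $B'\leq M$ with $B'\cong B$ and $\mathrm{Emb}(A,B')$ monochromatic. This is routine: reduce to two colours by the usual product argument, take $C\in\mathcal{C}$ with $C\to(B)^A_2$, realise it as a substructure $C'\leq M$ (possible since $C$ lies in the age of $M$), and restrict the colouring to $\mathrm{Emb}(A,C')$. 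I would then record the standard dictionary for $G$: it is a closed subgroup of the symmetric group on $M$; the pointwise stabilisers $G_A$ of finite substructures $A\leq M$ form a neighbourhood basis at the identity; homogeneity implies that every isomorphism between finite substructures of $M$ extends to an automorphism, so $G$ acts transitively on $\mathrm{Emb}(A,M)$ with the inclusion stabilised by $G_A$; and every bounded right-uniformly continuous function on $G$ is uniformly approximated by functions of the form $g\mapsto\phi(g^{-1}|_A)$ for some finite $A\leq M$.

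It suffices to treat metrizable flows: a general $G$-flow's greatest ambit is an inverse limit of metrizable ambits (recall $G$ is separable), and the inverse limit of the corresponding nonempty compact equivariant fixed-point sets is nonempty, so it is enough to show that every compact metrizable $G$-flow has a $G$-fixed point. Fix such a flow $X$, a point $x_0\in X$, a compatible metric $d$, and a sequence $(g_n)$ dense in $G$. The crux is: given $\epsilon>0$ and a positive integer $N$, produce $h\in G$ with $d(g_i\cdot(hx_0),\,hx_0)<\epsilon$ for all $i\leq N$. Choose $f_1,\dots,f_m\in C(X)$ and $\delta>0$ so that $|f_j(x)-f_j(x')|<\delta$ for every $j$ implies $d(x,x')<\epsilon$. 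Since $X$ is compact, a sufficiently small neighbourhood of the identity in $G$ moves every point of $X$ by as little as we please, so each orbit map $\tilde f_j(g):=f_j(gx_0)$ is right-uniformly continuous on $G$; approximating and discretising the $\tilde f_j$ and invoking the last remark above, I obtain a finite colouring $c$ of $G$ that depends only on $g^{-1}|_A$ for some finite $A\leq M$ and is fine enough that $c(g)=c(g')$ forces $|\tilde f_j(g)-\tilde f_j(g')|<\delta$ for every $j$. Let $B\leq M$ be a finite substructure containing $A$ together with $g_i^{-1}(A)$ for every $i\leq N$. Then, for any $h\in G$, each of $c(h),c(g_1h),\dots,c(g_Nh)$ is determined by $h^{-1}|_B$, through the embedding $A\to M$ obtained by composing the appropriate inclusion $A\hookrightarrow B$ (the identity inclusion for $c(h)$; the inclusion $A\cong g_i^{-1}(A)\hookrightarrow B$ for $c(g_ih)$) with $h^{-1}|_B\colon B\to M$. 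Now apply the Ramsey transfer to the induced colouring of $\mathrm{Emb}(A,M)$, obtaining $B'\leq M$ with $B'\cong B$ on which all embeddings of $A$ receive a single colour; lift an isomorphism $B\to B'$ to an element of $G$ by homogeneity and let $h$ be its inverse. All of the embeddings above then land in $B'$, so $c(h)=c(g_1h)=\cdots=c(g_Nh)$, whence $|f_j(g_ihx_0)-f_j(hx_0)|<\delta$ for all $i\leq N$ and all $j$, and so $d(g_i\cdot(hx_0),\,hx_0)<\epsilon$ for all $i\leq N$, as wanted.

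Running this with $\epsilon=1/N$ produces elements $h_N\in G$; let $y$ be a cluster point of $(h_Nx_0)$ in the compact space $X$. Continuity of the action together with the estimate above gives $g_iy=y$ for every $i$, and since $\{g\in G:gy=y\}$ is closed while $(g_n)$ is dense in $G$, $y$ is fixed by all of $G$. Hence every $G$-flow has a fixed point; that is, $\Aut(M)$ is extremely amenable.

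I expect the main obstacle to be the combinatorial heart of the second paragraph, and in particular the matching of ``sides'': the Ramsey property produces a monochromatic \emph{copy} of $B$, a condition on the embeddings \emph{out of} $B$ and hence a ``right-handed'' statement, whereas the fixed-point construction needs the \emph{left} translates $h,g_1h,\dots,g_Nh$ to receive a common colour. Getting the coset bookkeeping right --- relating $c$ to $g^{-1}|_A$, hence $c(g_ih)$ to $h^{-1}$ restricted to $g_i^{-1}(A)$, hence to distinguished sub-copies of $A$ sitting inside $B$ --- is the delicate point. The supporting facts (uniform continuity of the action of a compact flow, uniform approximation of right-uniformly continuous functions on $G$ by finitary ones, and the reduction to metrizable flows) are standard, but they too must be set up correctly.
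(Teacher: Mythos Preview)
The paper does not prove this theorem; it is stated as the result of Kechris, Pestov and Todor\v{c}evi\'c with citation~\cite{kpt}, and then used (together with Lemma~1.4 on the compactness of the space of linear orders) to deduce that $\Aut(M)$ preserves a total order. So there is no ``paper's own proof'' to compare against.

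Your sketch follows the original KPT route and is essentially correct in outline. The combinatorial bookkeeping you flag as the delicate point is handled correctly: writing $c$ as a function of $g^{-1}|_A$ means $c(g_ih)$ depends on $h^{-1}\circ(g_i^{-1}|_A)$, which is the composition of the fixed embedding $g_i^{-1}\colon A\to B$ with $h^{-1}|_B$; so once $h^{-1}|_B$ lands in a copy $B'$ on which all embeddings of $A$ receive one colour, all of $c(h),c(g_1h),\dots,c(g_Nh)$ agree. The reduction to metrizable flows via an inverse-limit/compactness argument is one acceptable way to set things up for a Polish group; an alternative is to work directly with the greatest ambit and finite open covers, as in some expositions of KPT, which avoids the metrizability detour.
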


Now the set of all total orders on a countable set is compact, and the
symmetric group acts continuously on it. For
completeness, we give the proof. Take a countable set, which we can assume to
be $\mathbb{N}$. Let $\mathcal{L}$ be the set of linear (total) orders on
$\mathbb{N}$. There is a topology on $\mathcal{L}$, defined as follows:
A basis for the open sets consists of all the sets for which
$\{0,1,\ldots,n-1\}$ have a given ordering, for all $n$.

\begin{lemma}
\begin{enumerate}
\item The topology on $\mathcal{L}$ is compact.
\item The symmetric group on $\mathbb{N}$ acts continuously on
$\mathcal{L}$.
\end{enumerate}
\end{lemma}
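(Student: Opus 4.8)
The plan is to handle the two parts separately, viewing $\mathcal{L}$ first as a compact space and then as a $\mathrm{Sym}(\mathbb{N})$-space. Throughout I would record a linear order as the set $L\subseteq\mathbb{N}\times\mathbb{N}$ of pairs $(i,j)$ with $i$ strictly below $j$. For (a) I would realise $\mathcal{L}$ as a closed subspace of a Cantor space: identify $L$ with its characteristic function $\chi_L\in\{0,1\}^{\mathbb{N}\times\mathbb{N}}$; this map is injective, and its image is exactly the set of $\chi$ satisfying irreflexivity, trichotomy on each unordered pair, and transitivity. Each single instance of one of these conditions constrains only finitely many coordinates and so defines a clopen subset of $\{0,1\}^{\mathbb{N}\times\mathbb{N}}$, so the image is an intersection of clopen sets, hence closed, hence compact by Tychonoff. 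It then remains to check that the subspace topology coincides with the stated one: the set $U_{n,P}$ of orders whose restriction to $\{0,\dots,n-1\}$ is a prescribed order $P$ is precisely the trace on $\mathcal{L}$ of the box fixing the coordinates $(i,j)$ with $i,j<n$, while conversely fixing any single coordinate is, within $\mathcal{L}$, a finite union of sets $U_{n,P}$ with $n$ one more than the larger index. A more hands-on alternative is to note that $\mathcal{L}$ is metrisable, via $d(L_1,L_2)=2^{-n}$ for the least $n$ on which $L_1$ and $L_2$ restrict differently to $\{0,\dots,n-1\}$, and then to extract a convergent subsequence from an arbitrary sequence $(L_k)$ by passing successively to subsequences agreeing on $\{0,1\},\{0,1,2\},\dots$ and diagonalising; the resulting coherent family of finite linear orders has a union which is the required limit.

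For (b) I would first fix the action: $g\in\mathrm{Sym}(\mathbb{N})$ sends $L$ to $g\cdot L=\{(gi,gj):(i,j)\in L\}$, equivalently $(i,j)\in g\cdot L$ iff $(g^{-1}i,g^{-1}j)\in L$, so that $g$ is an isomorphism from $(\mathbb{N},L)$ to $(\mathbb{N},g\cdot L)$; that this is a genuine group action by homeomorphisms is routine. To prove the action map $\mathrm{Sym}(\mathbb{N})\times\mathcal{L}\to\mathcal{L}$ continuous at $(g,L)$, take a basic open neighbourhood $U=U_{n,P}$ of $g\cdot L$, where $P$ is the restriction of $g\cdot L$ to $\{0,\dots,n-1\}$. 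Put $F=g^{-1}(\{0,\dots,n-1\})$, a finite set, and choose $m$ with $F\subseteq\{0,\dots,m-1\}$. Let $V$ be the set of $h\in\mathrm{Sym}(\mathbb{N})$ with $h^{-1}(i)=g^{-1}(i)$ for all $i<n$ — a coset of the pointwise stabiliser of $F$, hence basic open and containing $g$ — and let $W$ be the basic open neighbourhood of $L$ consisting of orders agreeing with $L$ on $\{0,\dots,m-1\}$. For $h\in V$, $L'\in W$ and $i,j<n$, we have $(i,j)\in h\cdot L'$ iff $(h^{-1}i,h^{-1}j)\in L'$ iff $(g^{-1}i,g^{-1}j)\in L'$ (since $h\in V$) iff $(g^{-1}i,g^{-1}j)\in L$ (since $L'$ and $L$ agree on $\{0,\dots,m-1\}\supseteq F$) iff $(i,j)\in g\cdot L$; hence $h\cdot L'\in U_{n,P}=U$, so $V\times W$ is carried into $U$, as required.

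Neither part presents a real obstacle; this is ``for completeness'' material, and the only thing to watch is the bookkeeping. In (a) one should confirm that the diagonal limit is a genuine total order, but irreflexivity, trichotomy and transitivity for any fixed triple of points are already decided inside a sufficiently long initial segment, where an honest linear order lives, so they persist in the limit. In (b) the mild trap is to confuse $g$ with $g^{-1}$: it is $g^{-1}$ that must be pinned down on $\{0,\dots,n-1\}$, which is why the relevant stabiliser is that of $F=g^{-1}(\{0,\dots,n-1\})$, and why the neighbourhood $W$ of $L$ has to reach out as far as the initial segment $\{0,\dots,m-1\}\supseteq F$.
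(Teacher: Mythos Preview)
Your argument is correct in both parts, but it diverges from the paper's route. For (a), the paper does not embed $\mathcal{L}$ as a closed subspace of $\{0,1\}^{\mathbb{N}\times\mathbb{N}}$; instead it sets up a \emph{bijection} between $\mathcal{L}$ and the full product $\prod_{i\ge 1}A_i$ with $|A_i|=i$ (discrete), by recording, for each $n$, which of the $n$ available gaps in the already-determined ordering of $\{0,\ldots,n-1\}$ the element $n$ slots into. This insertion encoding makes the basic open sets match up on the nose and gives compactness directly from Tychonoff, with no closedness check or topology comparison needed. Your Cantor-space embedding is the standard ``space of structures'' picture and has the advantage of generalising verbatim to any relational signature, at the cost of the extra bookkeeping you carry out. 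For (b), the paper only verifies that each fixed permutation $f$ acts as a continuous self-map of $\mathcal{L}$, by observing that the preimage of a basic open set determined by $\{0,\ldots,n-1\}$ is a finite union of basic open sets determined by $\{0,\ldots,m-1\}$ with $m-1=\max f^{-1}\{0,\ldots,n-1\}$. You go further and establish joint continuity of the action map $\mathrm{Sym}(\mathbb{N})\times\mathcal{L}\to\mathcal{L}$; this is the statement actually required for the application to extreme amenability, and your identification of $V$ as the left coset $g\cdot\mathrm{Stab}(F)$ with $F=g^{-1}\{0,\ldots,n-1\}$ is exactly the right bookkeeping.
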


\begin{proof} (a) We can identify the topology on $\mathcal{L}$ with the
product topology on $A_1\times A_2\times\cdots$, with $|A_i|=i$ and with the
discrete topology on each set $A_i$.

This works as follows: if we have ordered $0,1,\ldots,n-1$, then there are 
$n$ gaps into which $n$ can be put: before the first element, between the
first and second, \dots, after the last. Thus choosing elements of 
$A_1\times\cdots\times A_n$ corresponds to ordering $1,2,\ldots,n$. These
choices give a basis of open sets for the product topology.

Now Tychonoff's theorem shows that the product (and hence also $\mathcal{L}$)
is compact.

\smallskip

(b) Choose a permutation $f$ of $\mathbb{N}$. Let $m-1$ be the largest element
in $f^{-1}\{0,1,\ldots,n-1\}$. Then the inverse image of the basic open set
defined by the ordering of $0,1,\ldots,n-1$ is a union of finitely many basic
open sets defined by orderings of $0,1,\ldots,m-1$, and so is open. So $f$ is
continuous. \qed
\end{proof}

So, if $\mathcal{C}$ is a Ramsey class with Fra\"{\i}ss\'e limit $M$, then
$\Aut(M)$ preserves a total order $<$ on $M$. It follows that every structure
$A$ in $\mathcal{C}$ carries the total order induced from $<$, which
is preserved by its automorphisms, since these automorphisms extend to
automorphisms of $A$. Hence $A$ is rigid.

It follows that, if a Fra\"{\i}ss\'e class is made up of rigid structures, then
either its elements carry invariant total orders, or the class fails to be a
Ramsey class. Our purpose in the paper is to give a direct constructive proof
of this theorem. In the following result, readers may interpret the term
``reduct'' of $M$ to mean a structure invariant under the automorphisms of $M$;
we will explain further in the next section. 

\begin{theorem}
Let $\mathcal{C}$ be a Fra\"{\i}ss\'e class of structures over a finite
relational language. Assume that the elements of $\mathcal{C}$ are rigid. Then
one of the following holds:
\begin{enumerate}
\item the Fra\"{\i}ss\'e limit of $\mathcal{C}$ has a reduct which is a total
order;
\item there are structures $A,B\in\mathcal{C}$ with $|A|=2$
which demonstrate that $\mathcal{C}$ is not a Ramsey class.
\end{enumerate}\label{t:main}
\end{theorem}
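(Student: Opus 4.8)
The plan is to work throughout inside the Fra\"{\i}ss\'e limit $M$ and to reduce alternative~(b) to a statement about $M$ alone. Since $\mathcal C$ is the age of $M$, every $C\in\mathcal C$ embeds in $M$, and any $2$-colouring of the embeddings $A\to M$ with no monochromatic copy of $B$ pulls back along an embedding $C\to M$ to a $2$-colouring of the embeddings $A\to C$ with no monochromatic copy of $B$; so it is enough to exhibit $A,B\in\mathcal C$ with $|A|=2$ together with such a colouring of the embeddings $A\to M$. I will also use two standing facts: because the language is finite relational and $M$ is $\omega$-categorical, $\Aut(M)$ has only finitely many orbits on ordered pairs of distinct points — call them the \emph{$2$-types} — and any $\Aut(M)$-invariant relation on $M$ is a reduct in the sense of the paper; and because the members of $\mathcal C$ are rigid, reversal $\tau\mapsto\bar\tau$ of a $2$-type is fixed-point-free, so the $2$-types fall into \emph{reversal pairs} $\{\tau,\bar\tau\}$.

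First I would dispose of the case in which, for some $2$-type $\tau$, the $\Aut(M)$-invariant digraph on $M$ with edge set $\tau$ contains a directed cycle $x_1\to x_2\to\cdots\to x_m\to x_1$. Take $A=M|\{x_1,x_2\}$, a $2$-element structure of $2$-type $\tau$, and $B=M|\{x_1,\dots,x_m\}$; both lie in $\mathcal C$. The embeddings $A\to M$ are precisely the ordered pairs realising $\tau$. Fix an arbitrary linear order $\prec$ of $M$, and colour the embedding taking $A$ onto a pair $(x,y)$ by whether $x\prec y$. For any embedding $\beta$ of $B$ into $M$ the pairs $(\beta x_1,\beta x_2),\dots,(\beta x_m,\beta x_1)$ again realise $\tau$ (embeddings preserve $2$-types) and form a directed cycle in $M$, so they cannot all get the same colour — that would force $\beta x_1\prec\cdots\prec\beta x_m\prec\beta x_1$ or its reverse. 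Hence no copy of $B$ is monochromatic, and (b) holds.

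In the remaining case no digraph $\tau$ on $M$ contains a directed cycle, and I would argue that (a) holds. Choosing one member $\tau_p$ of each reversal pair $p$, the union of the $\tau_p$ is an $\Aut(M)$-invariant tournament on $M$; call it $<$. It is an invariant linear order — hence a total-order reduct — exactly when it is transitive, i.e.\ has no directed triangle. Suppose no choice of the $\tau_p$ achieves this. A directed triangle of some such $<$ whose three edges use three distinct reversal pairs imposes a constraint on the choices; a $3$-element structure whose three pairs have $2$-types lying in only two reversal pairs, with the repeated pair forming a directed path, imposes instead a parity-type constraint linking those two pairs (a triangle with all three edges in one pair, cyclically arranged, would be a directed triangle inside $\tau$, which is excluded here). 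Over the finitely many $3$-types of $M$ this yields a finite, unsolvable system of constraints; taking a minimal unsolvable subsystem, I would splice the $3$-element structures realising its constraints along their shared $2$-element substructures, using the amalgamation property, each splice rerouting an edge of a growing closed directed walk through a path labelled by the remaining pairs, until the walk collapses to a directed cycle all of whose edges lie in a single reversal pair. That cycle lies in a structure of $\mathcal C$, hence in $M$, contradicting the hypothesis of this case; so some choice of the $\tau_p$ is transitive and (a) holds.

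The hard part will be this last splicing step. One has to choose the successive amalgams so that the rerouting genuinely decreases the number of reversal pairs occurring along the walk and keeps every edge coherently oriented around it — a careless splice may reinstate a pair or reverse an orientation — and one must manage with ordinary amalgamation, since strong amalgamation is not available a priori. The most delicate configurations are the genuinely non-affine ones, in which no single pair of competing orientations already forces a contradiction, so that the minimal unsolvable subsystem involves several $3$-types essentially; pushing the splicing through there, or else showing that amalgamation prevents such configurations from occurring in a Fra\"{\i}ss\'e class of rigid structures, is where I expect the bulk of the work to sit.
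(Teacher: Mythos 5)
Your first half is sound and coincides with the paper's: rigidity makes each pair-orbit orientable by a formula, and a directed cycle inside a single orbital digraph $\tau$ gives a failure of the Ramsey property with $A$ the $2$-element structure of that type and $B$ the structure on the cycle, coloured by comparison with an arbitrary linear order (your reduction to colourings of embeddings into $M$ is also fine). The genuine gap is in the remaining case. You claim that if no single orbital digraph contains a directed cycle then alternative (a) must hold, i.e.\ some choice of one orientation from each reversal pair is transitive. That is a strictly stronger statement than the theorem, and nothing in your sketch establishes it: the ``splicing'' step that is supposed to turn an unsolvable system of orientation constraints into a directed cycle lying in a single orbit is exactly the point where the argument breaks down. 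Amalgamation only guarantees that two structures can be glued over a common substructure; it gives you no control over the $2$-types of the new cross pairs created in the amalgam, so you cannot ``reroute an edge through a path labelled by the remaining pairs''. Moreover you may not invoke strong amalgamation (you are not assuming $\mathcal{C}$ is Ramsey, so Ne\v{s}et\v{r}il's theorem is unavailable), and you acknowledge yourself that the non-affine configurations are unresolved. It is entirely possible that every orbital digraph is acyclic and yet every choice of orientations contains a (mixed-type) directed cycle; your plan would then be trying to prove something false, while the theorem still requires a conclusion in that situation.

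The paper's proof supplies precisely the device you are missing. It orients the types greedily: having chosen orientations $D_1,\ldots,D_i$ with $E_i=D_1\cup\cdots\cup D_i$ acyclic, it considers the next type and, using that the arcs of $D_{i+1}$ form a single orbit, shows that either one of the two orientations keeps the union acyclic, or \emph{both} $E_i\cup D_{i+1}$ and $E_i\cup D_{i+1}^*$ contain directed cycles through a fixed pair $\{x,y\}$. In the latter case one does not try to salvage a linear order; instead one takes $A=A_{i+1}$ and $B$ the structure induced on the union of the two cycles, and colours a pair inducing $A_{i+1}$ in any $C\in\mathcal{C}$ by whether its definable orientation agrees with a linear extension of the (acyclic) restriction of $E_i$ to $C$: whichever colour the image of $\{x,y\}$ receives, the corresponding cycle forces an arc of the opposite colour, so no copy of $B$ is monochromatic and conclusion (b) holds with $|A|=2$. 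Replacing your unproved ``no single-type cycle implies (a)'' claim by this mixed-cycle colouring is what makes the disjunction provable; without it, your proof is incomplete.
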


\begin{cor}
With the hypotheses of the Theorem, if $\mathcal{C}$ has a unique $2$-element
structure up to isomorphism, then in the second conclusion of the theorem, we
have $|A|=2$ and $|B|=3$.
\label{c:main}
\end{cor}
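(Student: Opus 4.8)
The plan is to observe that the extra hypothesis of the Corollary is exactly what is needed to manufacture an $\Aut(M)$-invariant tournament on the Fra\"{\i}ss\'e limit $M$, and then to run the dichotomy ``transitive versus cyclic'' on that tournament. Write $A$ for the unique $2$-element structure. Since $A$ is rigid, the transposition of its two points is not an automorphism, so the two points play distinguishable roles; fix a labelling of them as \emph{first} and \emph{second}. Because $A$ is the only $2$-element structure up to isomorphism, every $2$-subset $\{a,b\}$ of $M$ induces a copy of $A$, and by rigidity the isomorphism onto it is unique. Hence each unordered pair acquires a canonical orientation (declare $a\prec b$ when $a$ is the image of the first point), and this relation $\prec$ is a tournament on $M$ invariant under $\Aut(M)$, that is, a reduct in the sense of the Theorem. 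The same uniqueness-plus-rigidity remark shows that, for any $C\in\mathcal{C}$, the embeddings $A\to C$ correspond bijectively to the $2$-subsets of $C$, each carrying its $\prec$-orientation; so colouring the embeddings of $A$ is the same thing as $2$-colouring the oriented edges of $C$.

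Next I would split on whether $\prec$ is transitive. A tournament is a linear order precisely when it is transitive, and it is transitive precisely when it contains no directed $3$-cycle (the standard shortcutting argument). If $\prec$ is transitive it is the required total-order reduct, giving conclusion (a). Otherwise there exist points $p,q,r$ of $M$ with $p\prec q\prec r\prec p$; let $B$ be the $3$-element structure they induce, which lies in $\mathcal{C}=\mathrm{age}(M)$ and is a \emph{cyclic} triangle for $\prec$. I then claim that $(A,B)$ witnesses the failure of the Ramsey property, with $|A|=2$ and $|B|=3$ as required.

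For the failure I would exhibit a bad colouring valid for \emph{every} $C\in\mathcal{C}$. Fix an arbitrary linear order $<$ on the vertex set of $C$, and colour the $2$-subset $\{x,y\}$ (with $x<y$) red if $x\prec y$ and blue if $y\prec x$; that is, the colour records whether $\prec$ agrees with $<$ on the pair. On any $3$-subset, a monochromatic colouring means that $\prec$ either agrees with $<$ on all three pairs or disagrees on all three; either alternative makes $\prec$ transitive on those three points, so a monochromatic triple is always a transitive triangle, hence never a copy of the cyclic $B$. Thus no $C$ admits a $B$-monochromatic copy, and $\mathcal{C}$ is not Ramsey. The only point that needs genuine care is this last verification---checking that ``all edges agree'' and ``all edges disagree'' are the sole monochromatic patterns, and that each forces transitivity---but this is a short case check on three oriented edges; the crux of the whole argument is really the recognition that the uniqueness hypothesis converts ``$2$-element structure'' into ``oriented edge,'' after which the sole obstruction to a linear order is visibly the size-$3$ cyclic triangle.
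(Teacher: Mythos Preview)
Your argument is correct and follows essentially the same route as the paper's own proof: build an $\Aut(M)$-invariant tournament from the unique rigid $2$-element structure, take a total-order reduct if it is transitive, and otherwise use a cyclic triple as $B$ together with the ``agree/disagree with an arbitrary linear order'' colouring. The only cosmetic difference is that the paper phrases the construction of the tournament via Ryll--Nardzewski (distinct $2$-types give a defining first-order formula), whereas you argue directly from rigidity to $\Aut(M)$-invariance; since the paper explicitly permits ``reduct'' to be read as ``$\Aut(M)$-invariant structure,'' this comes to the same thing.
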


The structure of the rest of the paper is as follows. The next section will
contain some preliminaries from model theory which underlie our arguments.
Then we describe a simple example of a Fra\"{\i}ss\'e class of rigid structures
and give an explicit failure of the Ramsey property. Next we prove the theorem,
and we conclude with some comments and further questions.

\section{Logical preliminaries}

We will use the theorem of Engeler, Ryll-Nardzewski and Svenonius
\cite[Theorem 6.3.1]{hodges}. In this section we describe the theorem and
its background. The context is first-order logic, though we only use a
specialisation: we work over a relational language, containing no constant
or function symbols, but the theorem holds in general. We refer to Wilfrid
Hodges' book~\cite{hodges} for further background information.

A permutation group $G$ on a set $X$ is \emph{oligomorphic} if its action
on $X^n$ has only finitely many orbits for every positive integer $n$.

A \emph{theory} is simply a set of first-order sentences (formulae with no free
variables); the theory of a structure $M$ is the set of all
first-order sentences which hold in $M$. Dually, a model of a theory is a
structure in which all sentences of the theory hold.
If $T$ is a theory, then a \emph{type} is a set of formulae with $n$ free
variables which is maximal with respect to being consistent with $T$. A type
is \emph{realised} in a model $M$ for $T$ if there are $n$ elements of $M$
which can be substituted for the free variables so as to make all the formulae
true.

A theory is \emph{$\omega$-categorical} if it has, up to isomorphism, a unique
countable model. By slight abuse of language, we say that a structure $M$ is
$\omega$-categorical if its theory is; we treat an $\omega$-categorical theory
and its countable model as being essentially equivalent. Now the theorem of
Engeler, Ryll-Nardzewski and Svenonius from 1959 says the following:

\begin{theorem}
Let $M$ be a first-order structure with theory $T$. Then the following
are equivalent:
\begin{enumerate}
\item $T$ is $\omega$-categorical;
\item for every natural number $n$, there are only finitely many $n$-types
over $T$, all of which are realised in $M$;
\item the automorphism group $\Aut(M)$ is oligomorphic.
\end{enumerate}
If these conditions hold, then the set of $n$-tuples of $M$ realising a given
$n$-type is an orbit of $\Aut(M)$ on $M^n$, and every orbit corresponds to an
$n$-type in this way.
\label{t:erns}
\end{theorem}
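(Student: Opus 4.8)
The plan is to take $T=\mathrm{Th}(M)$, which is complete, and to recall that the $n$-types over $T$ are exactly the maximal $T$-consistent sets of formulas in $\bar x=(x_1,\dots,x_n)$; they form a compact, Hausdorff, totally disconnected space $S_n(T)$, the Stone dual of the Boolean algebra $\mathrm{Def}_n(T)$ of formulas $\varphi(\bar x)$ taken modulo $T$-equivalence. I would prove the implications $(2)\Rightarrow(1)$, $(1)\Rightarrow(2)$ and $(3)\Rightarrow(2)$, with the orbit/type correspondence and $(2)\Rightarrow(3)$ coming out along the way, so that the cycle closes.

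For $(2)\Rightarrow(1)$ and $(2)\Rightarrow(3)$: if each $S_n(T)$ is finite then it is discrete, so every type $p$ is isolated by a single formula $\varphi_p$, meaning $T\cup\{\varphi_p(\bar x)\}$ entails every member of $p$. Given two countable models $M_1,M_2\models T$, I would run a back-and-forth keeping a finite partial elementary map $\bar a\mapsto\bar b$; to adjoin a point $a\in M_1$, set $p=\mathrm{tp}^{M_1}(\bar a\, a)$, so $M_1\models\exists y\,\varphi_p(\bar a,y)$, hence — the two tuples having the same type — $M_2\models\exists y\,\varphi_p(\bar b,y)$, and any witness $b$ realises $p$ since $\varphi_p$ isolates it; the back step is symmetric, and exhausting the two countable domains yields an isomorphism, so $T$ is $\omega$-categorical. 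Running the same argument with $M_1=M_2=M$, started from an arbitrary pair of tuples of the same type, produces an automorphism of $M$ carrying one to the other; with the trivial converse this shows the $\Aut(M)$-orbits on $M^n$ are precisely the $n$-types realised in $M$, and by hypothesis these are all the $n$-types and there are finitely many of them, which is $(3)$ together with the final clause.

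For $(1)\Rightarrow(2)$: first, every $n$-type over $T$ is realised in $M$, for otherwise compactness gives a countable model of $T$ realising a type omitted by $M$, contradicting $\omega$-categoricity; second, each $S_n(T)$ is finite, since an infinite compact Hausdorff space has a non-isolated point $p$, and then the Omitting Types Theorem produces a countable model of $T$ omitting $p$ while compactness produces one realising $p$, again two non-isomorphic countable models. For $(3)\Rightarrow(2)$: by oligomorphy there are finitely many $\Aut(M)$-orbits on $M^n$, and every definable subset of $M^n$ is a union of orbits, so $\mathrm{Def}_n(M)$ is finite; since $T=\mathrm{Th}(M)$, the map $[\varphi]_T\mapsto\varphi(M^n)$ identifies $\mathrm{Def}_n(T)$ with $\mathrm{Def}_n(M)$, so this algebra is finite, $S_n(T)$ is finite and discrete, and each of its points is an atom of $\mathrm{Def}_n(T)$, i.e.\ a minimal nonempty definable subset of $M^n$ — in particular nonempty, so realised in $M$, giving $(2)$.

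The one genuinely non-elementary ingredient I expect to need is the Omitting Types Theorem, used in $(1)\Rightarrow(2)$; everything else is a back-and-forth argument together with the observation (for $(3)\Rightarrow(2)$) that oligomorphy collapses the Boolean algebra of definable sets to a finite one, which is exactly what rules out ``hidden'' unrealised types. The point to be careful about throughout is that completeness of $T=\mathrm{Th}(M)$ is being used — both to identify formulas modulo $T$ with definable subsets of $M$, and to ensure the isolating formulas behave uniformly across all models of $T$; dropping completeness, the statement must be read as applying to each completion separately.
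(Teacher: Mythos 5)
The paper does not prove this statement: it is the Engeler--Ryll-Nardzewski--Svenonius theorem, quoted from Hodges \cite{hodges}, so there is no in-paper argument to compare with. Your proof is the standard one (isolated types plus back-and-forth for (b)$\Rightarrow$(a), (b)$\Rightarrow$(c) and the orbit--type correspondence; omitting types for (a)$\Rightarrow$(b); $\Aut(M)$-invariance of definable sets for (c)$\Rightarrow$(b)) and is correct; the one hypothesis you use silently and should state is that $M$ is countable (as it is in all the paper's applications), since both the back-and-forth construction of automorphisms of $M$ and the claim that a type omitted in $M$ would contradict $\omega$-categoricity rely on it.
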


It follows that a homogeneous relational structure $M$ over a finite relational
language is $\omega$-categorical. The orbits on $n$-tuples of distinct elements
correspond to the $n$-element structures in the age of $M$.

In general, a \emph{reduct} of a structure $M$ consists of a set of relations
(possibly over a different language) which are definable in terms of the
relations of $M$ by first-order formula without free variables. If $M$ is
$\omega$-categorical, then it follows from Theorem~\ref{t:erns} that reducts
of $M$ correspond bijectively to closed overgroups of $\Aut(M)$ in the
symmetric group.

If $M$ is homogeneous over a finite relational language,
and $A$ is a finite substructure of $M$, then the restriction to $A$ of
the relations in any reduct of $M$ are preserved by any automorphism of $A$,
since the automorphisms of $A$ are induced by automorphisms of $M$ fixing $A$
setwise. This explains why the existence of a total order as a reduct of $M$
implies that all structures in the age of $M$ are rigid.

\section{An example}
\label{s:ex}

This example is taken from the second author's thesis~\cite{siavash}.

Here is an elementary group-theoretic result we use: \emph{Cauchy's theorem}.

\begin{theorem}
If a finite group $G$ has order divisible by a prime $p$, then $G$ contains an
element of order $p$.
\label{t:cauchy}
\end{theorem}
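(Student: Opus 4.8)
The plan is to give James McKay's short combinatorial proof, via a cyclic group action on the set of $p$-tuples whose product is the identity. First I would introduce the set
$X=\{(g_1,\ldots,g_p)\in G^p : g_1g_2\cdots g_p=e\}$
and observe that $|X|=|G|^{p-1}$, since $g_1,\ldots,g_{p-1}$ may be chosen arbitrarily in $G$ and then $g_p=(g_1\cdots g_{p-1})^{-1}$ is forced.

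Next I would let the cyclic group of order $p$ act on $X$ by the shift $(g_1,g_2,\ldots,g_p)\mapsto(g_2,\ldots,g_p,g_1)$. This is well defined because $g_1\cdots g_p=e$ implies $g_2\cdots g_pg_1=g_1^{-1}(g_1\cdots g_p)g_1=e$. Since the acting group has prime order $p$, every orbit has size $1$ or $p$, and an orbit has size $1$ exactly when the tuple is constant, i.e.\ of the form $(g,\ldots,g)$ with $g^p=e$.

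Then I would finish by counting modulo $p$. Writing $f$ for the number of fixed points, the orbit decomposition gives $|X|\equiv f\pmod p$. By hypothesis $p$ divides $|G|$, hence $p$ divides $|G|^{p-1}=|X|$, so $p\mid f$. The constant tuple $(e,\ldots,e)$ is a fixed point, so $f\geq 1$, and therefore $f\geq p\geq 2$. Thus there is some $g\neq e$ with $g^p=e$; as $p$ is prime, $g$ has order exactly $p$, as required.

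As for the main obstacle: there is essentially none, which is why I would use this argument rather than the classical induction (the abelian case via a cyclic quotient, then the general case via the class equation). The only points that need a moment's care are verifying that the cyclic shift really does preserve $X$, and identifying the fixed points of the shift with the solutions of $g^p=e$; everything else is routine manipulation of the orbit-counting identity.
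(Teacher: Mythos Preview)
Your proof is correct: this is McKay's standard combinatorial argument, and every step is sound. Note, however, that the paper does not actually prove this statement at all---it merely quotes Cauchy's theorem as a known ``elementary group-theoretic result'' and uses it as a black box in the subsequent propositions. So there is no approach in the paper to compare against; your proposal simply supplies a self-contained proof where the paper gives none.
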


A \emph{tournament} is a set with a binary operation $\to$ on $X$ such that
\begin{enumerate} 
\item $\neg(x\to x)$ for all $x\in X$;
\item for any $x,y\in X$ with $x\ne y$, exactly one of $x\to y$ and $y\to x$
holds.
\end{enumerate}

\begin{prop}
\begin{enumerate}
\item Finite tournaments form a Fra\"{\i}ss\'e class with the strong
amalgamation property.
\item Finite tournaments have automorphism groups of odd order.
\end{enumerate}
\end{prop}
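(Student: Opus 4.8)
The plan is to verify the Fra\"{\i}ss\'e axioms directly for part (a), and to deduce part (b) from Cauchy's theorem.

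For part (a), closure under isomorphism and heredity are immediate, since both tournament axioms (irreflexivity, and exactly one arc between any two distinct points) are inherited by induced substructures. For the joint embedding property, given tournaments $B_1$ and $B_2$ I would take their disjoint union as vertex set and orient every edge between the two parts in a single direction, say from $B_1$ to $B_2$; this is patently a tournament containing both. The substantive step is strong amalgamation. Given embeddings $f_i\colon A\to B_i$, I would identify $A$ with a common substructure so that $B_1\cap B_2=A$, and build $C$ on $B_1\cup B_2$, retaining the arcs already present within $B_1$ and within $B_2$, and orienting every remaining pair (one endpoint in $B_1\setminus A$, the other in $B_2\setminus A$) from the first endpoint to the second. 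Because the two given structures agree on $A$, there is no conflict; every pair of distinct vertices receives exactly one arc and no loops are introduced, so $C$ is a tournament. Since no vertices outside $A$ are identified, the amalgamation is strong. Fra\"{\i}ss\'e's theorem then yields the Fra\"{\i}ss\'e limit (the countable random tournament).

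For part (b), I would argue by contradiction. Suppose some finite tournament $T$ has $\Aut(T)$ of even order. By Cauchy's theorem (Theorem~\ref{t:cauchy}) with $p=2$, there is an automorphism $g$ of order $2$; being an involution, $g$ moves some point, hence there exist distinct $x,y\in T$ with $g(x)=y$ and $g(y)=x$. By the second tournament axiom exactly one of $x\to y$ and $y\to x$ holds; say $x\to y$. Applying the automorphism $g$ gives $g(x)\to g(y)$, that is, $y\to x$, so both arcs are present, contradicting the axiom. Hence $\Aut(T)$ has odd order.

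I do not anticipate a real obstacle here. In part (a) the only point needing care is that the freely chosen ``cross'' arcs cannot clash with the inherited ones, which holds simply because the two sets of pairs in question are disjoint and the overlap $A$ inherits a single consistent orientation from both sides. Part (b) is essentially a one-line application of Cauchy's theorem once one observes that an order-two automorphism must genuinely swap a pair of vertices.
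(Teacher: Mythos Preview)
Your proposal is correct and matches the paper's approach: the paper dismisses (a) as ``straightforward'' (your explicit verification of heredity, JEP, and strong amalgamation is exactly what that word encodes), and for (b) it invokes Cauchy's theorem to obtain an involution that must swap two points, which is precisely your argument.
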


\begin{proof} (a) is straightforward. For (b), if $T$ is a tournament with
$|\Aut(T)|$ even, then by Theorem~\ref{t:cauchy} there would be an automorphism
of order~$2$, which necessarily interchanges two points; this is not possible.
\qed
\end{proof}

A \emph{C-structure} is a ternary relational structure defined on the
set of leaves of a finite rooted binary tree as follows: given three leaves
$x,y,z$, one of them, say $x$, is distinguished by the fact that the paths from
the root to $y$ and to $z$ coincide for longer than either does with the path
from the root to $x$. In other words, omitting vertices not on these paths and
suppressing divalent vertices, the tree looks like this:

\begin{center}
\setlength{\unitlength}{1mm}
\begin{picture}(20,35)
\put(10,0){\circle*{1}} \put(10,0){\circle{2}}
\multiput(10,10)(5,10){3}{\circle*{1}}
\multiput(5,20)(5,10){2}{\circle*{1}}
\put(10,0){\line(0,1){10}}
\put(10,10){\line(1,2){10}}
\multiput(10,10)(5,10){2}{\line(-1,2){5}}
\put(4,22){$x$}
\put(9,32){$y$}
\put(19,32){$z$}
\end{picture}
\end{center}
We write the relation as $R(x\mid yz)$.

\begin{prop}
\begin{enumerate}
\item The finite C-relations form a Fra\"{\i}ss\'e class with the strong
amalgamation property.
\item The automorphism group of a finite C-relation has order a power of $2$.
\end{enumerate}
\end{prop}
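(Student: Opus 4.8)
The plan is to prove (a) and (b) in parallel, exploiting the fact that a C-relation is essentially the combinatorial data of an (unordered) rooted binary tree on the leaf set, so both statements reduce to facts about such trees. First I would make precise the correspondence: a finite C-relation $R$ on a set $X$ with $|X|\ge 2$ determines, and is determined by, a rooted binary tree whose leaves are the elements of $X$, obtained by taking the ``median'' or ``youngest common ancestor'' structure --- for any two leaves $y,z$, the set $\{x : \neg R(x\mid yz)\}$ together with refinements over all pairs recovers the nested family of subtrees. (In fact the standard axiomatisation of C-relations guarantees exactly this; I would either quote it or sketch the short argument that the betweenness/separation data of $R$ yields a laminar family of subsets of $X$, hence a tree.) The key structural point is that the tree is unique up to the obvious isomorphism, and that isomorphisms of C-relations correspond bijectively to isomorphisms of the associated rooted binary trees.

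For part (a), amalgamation becomes a statement about trees: given C-relations $B_1,B_2$ with a common substructure $A$, I would realise $A$ as the induced subtree (the tree spanned by the leaves of $A$, with divalent vertices suppressed) inside each $T_{B_i}$, then glue $T_{B_1}$ and $T_{B_2}$ along this spanning subtree of $A$, hanging the extra leaves of $B_1$ and of $B_2$ off the appropriate edges or internal vertices, and finally refining any non-binary vertices that result into binary ones (this last step is harmless and does not change the induced C-relation on either $B_i$). Because the extra leaves are attached in disjoint ``pockets'' determined by where they sit relative to $A$, the images intersect exactly in $A$, giving strong amalgamation; the joint embedding property is the special case $A=\emptyset$ (or a one-point structure), and heredity is clear since any subset of leaves inherits a C-relation. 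The one point needing a little care is the refinement step: I must check that subdividing a vertex of degree $>3$ into a caterpillar of binary vertices can always be done compatibly with the attachments coming from $B_1$ and $B_2$ without forcing any new coincidence of leaves, which is a routine case check.

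For part (b), the plan is to show by induction on $|X|$ that $|\Aut(R)|$ is a power of $2$, using the tree picture. The root of $T_R$ has exactly two children, whose leaf sets partition $X$ into $X_1\sqcup X_2$; any automorphism of $R$ induces an automorphism of $T_R$, hence either fixes $\{X_1,X_2\}$ pointwise or swaps them. So $\Aut(R)$ has a subgroup of index $1$ or $2$ consisting of automorphisms fixing each $X_i$ setwise, and that subgroup embeds into $\Aut(R{\restriction}X_1)\times\Aut(R{\restriction}X_2)$, which by induction has order a power of $2$; hence $|\Aut(R)|$ is a power of $2$. (One must note that the swap can only occur when the two subtrees are isomorphic, in which case it genuinely doubles the order; either way we stay within powers of $2$.) The base case $|X|\le 2$ is trivial. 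Alternatively, and perhaps cleanly, one can invoke Cauchy's theorem as in the tournament case: it suffices to show $\Aut(R)$ has no element of odd prime order $p$; such an element would permute the (at most two) top-level subtrees, so $p=2$ unless it fixes both, and then by induction on the tree it fixes everything --- so in fact no odd prime divides $|\Aut(R)|$.

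The main obstacle I anticipate is not in (b), which is a clean induction, but in pinning down the tree--C-relation dictionary in (a) with enough precision that the gluing construction is manifestly well-defined: one has to be sure that ``the subtree spanned by $A$'' is computed the same way inside $B_1$ and $B_2$ (it is, since it depends only on $R{\restriction}A$), and that the resulting amalgam, after binarising, induces exactly $R_{B_i}$ on each $B_i$ rather than some coarsening. Once the laminar-family description of C-relations is in hand, this is bookkeeping, but it is the step where an error could hide.
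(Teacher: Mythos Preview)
Your proposal is correct and aligns with the paper's approach: the paper also reduces everything to the tree picture, for (a) simply citing Adeleke--Neumann for the fact that the C-relation determines the tree (your sketch of the gluing is more explicit than what the paper provides), and for (b) using precisely your ``alternative'' Cauchy argument---an automorphism of odd prime order fixes the root, hence its two children, and by working up the tree is the identity. Your first inductive argument for (b) is a valid variant but the paper opts for the Cauchy route you mention second.
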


\begin{proof} (a) This is proved by showing that the C-relation determines the
tree; we refer to Adeleke and Neumann~\cite{an} for details.

\smallskip

(b) In a counterexample, Theorem~\ref{t:cauchy} shows that there would be an
automorphism of odd prime order $p$. Such an automorpism preserves the tree,
and fixes the root, so fixes the two children of the root; working up the
tree, we see that it is the identity, a contradiction.
\end{proof}

Now we take $\mathcal{C}$ to be the class of structures consisting of a finite
set carrying both a tournament and a C-relation, these being chosen
independently. Since both tournaments and C-relations are Fra\"{\i}ss\'e 
classes with strong amalgamation, so is $\mathcal{C}$. Moreover, the
automorphism group of a $\mathcal{C}$-structure is the intersection of the
automorphism group of a tournament (of odd order) with that of a C-structure
(of $2$-power order), and so is trivial.

Now we show that $\mathcal{C}$ is not a Ramsey class.
Since the Ramsey property has several alternations of
quantifiers, we state explicitly what it means for a class $\mathcal{C}$ to
fail the Ramsey property: there exist $A,B\in\mathcal{C}$ such that, for any
$C\in\mathcal{C}$, we can colour the embeddings of $A$ into $C$ so that there
is no monochromatic embedding of $B$.

We take $A$ to be the unique $2$-element structure in $\mathcal{C}$ (a single
tournament arc $x\to y$), and $B$ the unique $3$-element structure on which the
tournament is a $3$-cycle. Take any structure $C\in\mathcal{C}$. We colour
the embeddings of $A$ into $C$ (that is, the tournament arcs in $C$), as
follows. Choose an arbitrary total order $<$ on $C$, and colour $u\to v$ red
if $u<v$ and blue if $u>v$. Now a copy of $B$ in $C$ cannot be monochromatic:
if all three arcs were red, it would agree with the total order $<$, and if
they were blue, it would agree with the converse of $<$; both possibilities
contradict the fact that $B$ carries a $3$-cycle, not a total order.

\section{Proofs}

Before beginning the proof, we make an observation that will be used often.
A directed graph $D$ is \emph{acyclic} if it contains no directed cycle. Now
if $D$ is an acyclic digraph, then there is a total order $<$ on the vertices
of $D$ such that $x\to y$ implies $x<y$. To see this, take the set $X_1$ of
\emph{sinks} in $D$ (the vertices with no incoming arcs) to be the minimal
elemennts, and if $X_1,\ldots,X_i$ have been defined and their union is not
$D$, take $D_{i+1}$ to be the set of sinks of
$D\setminus(X_1\cup\cdots\cup X_i)$. Then construct the order by putting an
arbitrary total order on each set $X_i$ (noting that that there are no arcs
within $X_i$); and, if $x\in X_i$ and $y\in X_j$ with $i<j$, putting $x<y$.

\paragraph{Proof of Theorem~\ref{t:main} and Corollary~\ref{c:main}}
We treat first the simpler case where $\mathcal{C}$ contains a unique
$2$-element structure up to isomorphism; we verify that the conclusions of the
Corollary hold in this case.

Let $M$ be the Fra\"{\i}ss\'e limit of $\mathcal{C}$. Then
since the language is finite, $M$ is $\omega$-categorical, so $n$-types in
the theory of $M$ coincide with orbits of that automorphism group of $M$ on
$n$-tuples, by the Ryll-Nardzewski theorem.

Let $\{a,b\}$ be a $2$-element structure in $\mathcal{C}$. Since it is
rigid, the ordered pairs $(a,b)$ and $(b,a)$ have different types, and so
there is a first-order formula which distinguishes them. This formula
defines an ordering of the pair $(a,b)$: we put $a\to b$ if it holds, and
$b\to a$ otherwise. This is clearly a tournament which is a reduct of $M$.

If $T$ is a total order, then we have the first alternative of the Theorem;
so suppose not. Then $T$ fails to satisfy transitivity, so it contains a
cyclic triple $a\to b\to c\to a$. Now take $A$ to be the unique $2$-element
structure in $\mathcal{C}$, and $B$ the induced substructure on a cyclic
triple of $T$.

Let $C$ be any member of $\mathcal{C}$. We choose an arbitrary total order on
$C$, and define a colouring of the $2$-element substructures by the rule that
$\{a,b\}$ is red if $(a\to b)\Leftrightarrow(a<b)$ (that is, the total order
agrees with the tournament on $\{a,b\}$) and blue otherwise. Now
$C$ contains no monochromatic copy of $B$: for if all the $2$-subsets of $B$
were red, then the tournament would agree with the total order on $B$, while
if they were blue, it would agree with its converse, and neither of these can
be the case.

\medskip

Now we turn to the general case, where there are finitely many $2$-element
structures in $\mathcal{C}$ up to isomorphism, say $A_1,A_2,\ldots,A_s$.
In any structure in $\mathcal{C}$, or in its Fra\"{\i}ss\'e limit $M$,
the pairs carrying an element of $A_i$ form a graph $\Gamma_i$; these graphs
partition the complete graph. Also, by Ryll-Nardzewski, there is a first-order
formula which defines a direction on each edge of $\Gamma_i$, hence defining
a directed graph $D_i$ and its converse $D_i^*$.

We claim that, if any $D_i$ contains a directed cycle, then we have a failure
of the Ramsey property. For let $A$ be the structure $A_i$, and $B$ the
structure induced on the vertex set of the directed cycle. Let $C$ be any
element of $\mathcal{C}$. We define a $2$-colouring of the embedded copies of
$A$ in $C$ as follows. Choose an arbitrary total order $<$ on $C$, and colour
a pair $\{x,y\}$ carrying a copy of $A_i$ red if
$((x\to y)\Leftrightarrow(x<y))$ and blue otherwise.  In any embedded copy
of $B$, the pairs in the directed cycle cannot all be red, or the order $<$
would contain a directed cycle, and similarly cannot all be blue.

Now we show that we can choose one from each pair $(D_i,D_i^*)$ and call it
$D_i$ so that, if $E_i=D_1\cup\cdots\cup D_i$, then either the digraph
$E_i$ is acyclic, or we have a failure of the Ramsey property. This is proved
by induction on $i$; we have already observed the case $i=1$. So suppose
that $D_1,\ldots,D_i$ are defined, and let $\{x,y\}$ be a $2$-set inducing
a copy of $A_{i+1}$.

We subdivide into three cases, according as there are directed paths in $E_i$
joining both, just one, or neither of $(x,y)$ and $(y,x)$.

\paragraph{Case 1:} There is a path from $x$ to $y$ and a path from $y$ to $x$.
Concatenating these paths, we find a directed cycle in $E_i$, contrary to
hypothesis.

\paragraph{Case 2:} There is a path from $x$ to $y$ but not from $y$ to $x$.
Let $D_{i+1}$ be the digraph having an arc $x\to y$, and put
$E_{i+1}=E_i\cup D_{i+1}$. Suppose that $E_{i+1}$ contains a directed cycle.
Any arc $u\to v$ of this cycle which belongs to $D_{i+1}$ can be replaced
by a directed path from $u$ to $v$ in $E_i$ (since these arcs form an orbit
of $\Aut(M)$, by Theorem~\ref{t:erns}). The result is a directed cycle
in $E_i$, again contrary to assumption.

\paragraph{Case 3:} There are no directed paths in $E_i$ between $x$ and $y$
(in either order). Set $E_{i+1}=E_i\cup D_{i+1}$ and
$E^*_{i+1}=E_i\cup D^*_{i+1}$. If either one of these digraphs is acyclic,
then we change the notation if necessary so that this one is $E_{i+1}$, and
the inductive step is complete.

Suppose that both of these digraphs contain
directed cycles (which we may assume include $x\to y$ or $y\to x$ as
appropriate). Note that each of these cycles must contain an arc of $D_{i+1}$
or $D^*_{i+1}$ other than $x\to y$ or $y\to x$, by the case assumption.
Now let $A=A_{i+1}$ and let $B$ be the structure induced on the
union of these two cycles. We claim the Ramsey property fails for $A$ and $B$.

So let $C$ be any structure in $\mathcal{C}$. We know that the restriction of
$E_i$ to $C$ is acyclic, so choose a total order $<$ on $C$ so that $u<v$ for
any  arc $u\to v$ in $E_i$. Colour a pair $\{x',y'\}$ inducing $A_{i+1}$ red
if $x'<y'$ and blue otherwise. We claim that no copy of $B$ can be
monochromatic. To see this, suppose first that $x<y$. The directed cycle in
$E_{i+1}$ containing the arc $x\to y$ has all its arcs in $E_i$ red, as well
as the arc $x\to y$; but not all its arcs can be red, so there must be another
arc of $D_{i+1}$ in this cycle which is coloured blue. A similar argument
holds in the other case. This concludes the discussion of Case 3.

\medskip

After having dealt with all of the types of $2$-element structures, either
we have found a failure of Ramsey's Theorem, or we have built an acyclic
directed graph on $M$ in which every pair of vertices are connected by an arc.
This digraph is a reduct of $M$, since each digraph $D_i$ is defined by a
first-order formula and there are only finitely many of them. So we have
found a total order which is a reduct of $M$, and the proof of the theorem
is complete. \qed

\section{Further comments}

We mention here some questions that might be of interest.
\begin{enumerate}
\item Our results hint that there might be a bound for $|B|$ (if the pair
$(A,B)$ fails the Ramsey property) in terms of the number of isomorphism
types of $2$-element structures. Is this true?
\item The Fra\"{\i}ss\'e limit of the structure constructed in
Section~\ref{s:ex} has automorphism group which is torsion-free but not
extremely amenable. This group could be investigated further. In particular,
does it have any properties related to amenability? Is it simple? What are
its reducts?
\item Is there a general theory of Fra\"{\i}ss\'e classes of rigid structures?
\end{enumerate}

\end{document}